\documentclass{amsart}
\usepackage{amsmath,amsfonts,amssymb,amsthm,amscd}
\usepackage[small,nohug,heads=littlevee]{diagrams}

\begin{document}

\theoremstyle{definition}
\newtheorem{step}{Step}
\newtheorem{definition}{Definition}[section]
\newtheorem{ex}{Example}[section]
\theoremstyle{remark}
\newtheorem{remark}{Remark}[section]
\theoremstyle{plain}
\newtheorem{prop}{Proposition}[section]
\newtheorem{claim}{Claim}[section]
\newtheorem{lemma}{Lemma}[section]
\newtheorem{theorem}{Theorem}[section]
\newtheorem{cor}{Corollary}[section]

\title{A Universal Property of the Groups Spin$^c$ and Mp$^c$}
\author{Shay Fuchs}
\begin{abstract}
It is well known that \emph{spinors  on oriented
Riemannian manifolds cannot be defined as
sections of a vector bundle associated with the
frame bundle} (see \cite{Fr}). For this reason
spin and spin$^c$ structures are often
introduced. In this paper we prove that spin$^c$
structures have a universal property among all
other structures that enable the construction of
spinor bundles. We proceed to prove a similar
result for metaplectic$^c$ structures on
symplectic manifolds.
\end{abstract}

\maketitle

\section{Introduction}
Around 1928, while studying the motion of a free
particle in special relativity, P.A.M Dirac
raised the problem of finding a square root to
the three dimensional Laplacian, acting on smooth
functions on $\mathbb R^3$. Finding a square root
was necessary in order to study such a system in
the quantum mechanics setting. His assumptions
were that such a square root ought to be a first
order differential operator with constant
coefficients.

It is well known that in the Euclidean space
$\mathbb R^n$, the problem of finding such a
square root involves Clifford algebras and their
representations. This square root is often called
\emph{a Dirac operator}, and it acts on the
representation space for the Clifford algebra
(also called \emph{the space of spinors}).

The transition from the flat Euclidean space to a
general Riemannian manifold is not obvious. There
is no representation of the group $SO(n)$ on the
space of spinors which is compatible with the
Clifford algebra action. This means that in order
to generalize the construction of the Dirac
operator to Riemannian manifolds, we must
introduce additional structure. More precisely,
we need to lift the Riemannian structure (where
the group involved in $SO(n)$) to a `better'
group $G$.

It is known that the construction of the Dirac
operator can be carried out if our manifold has a
spin, a spin$^c$, or an almost complex structure.

In this paper we answer the question: what is the
best (or universal) structure that enables the
above process? We show that the group $Spin^c(n)$
(or a non-compact variant of it) is a universal
solution to our problem, in the sense that any
other solution will factor uniquely through the
spin$^c$ one. This suggests that spin$^c$
structures are the natural ones to consider while
quantizing the classical energy observable.

It is important to note that spin$^c$ structures
are equivalent to having an irreducible Clifford
module on the manifold. This fact appears as
Theorem 2.11 in \cite{Plymen}. This is another
hint for the universality of the group spin$^c$.

Interestingly, a similar problem can be stated in
the symplectic case, and the universal solution
involves $Mp^c$ structures, discussed in
\cite{Mp^c}. The universality statement and the
proof are almost identical to those in the
Riemannian case.

This paper is organized as follows. First we
introduce the problem of finding a square root
for the $n$-dimensional Laplacian acting on
smooth (complex-valued) functions on $\mathbb
R^n$. This will motivate the definition of
Clifford algebras and the study of their
representations. Then we generalize the problem
to an arbitrary oriented Riemannian manifold, and
explain why more structure is needed in order to
define the Dirac operator. Next, we state our
main theorem about the universality of the
(non-compact variant of the) spin$^c$ group, and
deduce a few corollaries. In the last section, we
prove a similar result in the symplectic setting.
Namely, the universality of the metaplectic$^c$
group.

This work was motivated by the introduction of
\cite{Fr}, where the problem of defining a Dirac
operator for an arbitrary oriented Riemannian
manifold is discussed, and the necessity of
additional structure is mentioned. For the study
of symplectic Dirac operators, we refer to
\cite{Sympl. Dirac}, which is the `symplectic
analogue' of \cite{Fr}. Both \cite{Fr} and
\cite{Sympl. Dirac} are excellent resources.

 \textbf{Acknowledgements.} I would like to thank Yael Karshon, for encouraging me to
 pursue
this project, guiding and supporting me through
the process of developing and writing the
material, and for having always good advice and a
lot of patience. I also would like to thank
Eckhard Meinrenken for bringing the reference
\cite{Plymen} to my attention, and for his
suggestion to check the symplectic case as well.

\section{The Euclidean case and Clifford
algebras}

Consider the negative Laplacian acting on smooth
complex valued functions on the $n$-dimensional
Euclidean space $$\Delta\colon
C^\infty(\mathbb{R}^n;\mathbb{C})\to
C^\infty(\mathbb{R}^n;\mathbb{C})\qquad,\qquad\Delta=-\sum_{j=1}^n\frac{\partial^2}{\partial
x_j^2}\quad ,$$ and suppose we are interested in
finding a square root for $\Delta$, i.e., we seek
an operator $$P\colon
C^\infty(\mathbb{R}^n;\mathbb{C})\to
C^\infty(\mathbb{R}^n;\mathbb{C})$$ with
$P^2=\Delta$. Motivated by physics, we assume
that $P$ is a first order differential operator
with constant coefficients, i.e., that
$$P=\sum_{j=1}^n\gamma_j\,\frac{\partial}{\partial
x_j}\qquad,\qquad\gamma_j\in\mathbb{C}\ .$$ A
simple computation shows that such a $P$ cannot
exists unless $n=1$, and then $P=\pm
i\frac{\partial}{\partial x}$. Indeed, the
condition $P^2=\Delta$ implies that
$$P^2=\sum_{j,l=1}^n\gamma_j\gamma_l\frac{\partial^2}{\partial
x_j\partial
x_l}=-\sum_{j=1}^n\frac{\partial^2}{\partial
x_j^2}$$ and hence
$$\gamma_j\gamma_l+\gamma_l\gamma_j=0\quad,\quad\gamma_j^2=-1\text{\quad
for all\quad} j\ne l$$ which is impossible,
unless $n=1$ and $\gamma_1=\pm i$.

One way to modify this problem is to observe that
the commutativity of complex numbers
($\gamma_j\gamma_l=\gamma_l\gamma_j$) is the
property that made this construction impossible.
Therefore, we hope to be able to find such a $P$
if the $\gamma_j$'s are taken to be matrices,
instead of complex numbers.

Thus, fix an integer $k>1$, define the
\emph{vector valued Laplacian} as
$$\underline{\Delta}\colon C^\infty(\mathbb{R}^n;\mathbb{C}^k)\to
C^\infty(\mathbb{R}^n;\mathbb{C}^k)\qquad,\qquad
\underline{\Delta}(f_1,\dots,f_k)=(\Delta
f_1,\dots,\Delta f_k) \ ,$$ and look for a square
root $$\underline P\colon
C^\infty(\mathbb{R}^n;\mathbb{C}^k)\to
C^\infty(\mathbb{R}^n;\mathbb{C}^k) \ .$$ If we
assume, as before, that $$\underline
P=\sum_{j=1}^n\gamma_j\frac{\partial}{\partial
x_j}\text{\qquad with\qquad} \gamma_j\in
M_{k\times k}(\mathbb C)\ ,$$ then $\underline
P^2=\underline\Delta$ if and only if
$$\gamma_j\gamma_l+\gamma_l\gamma_j=0\quad,\quad\gamma_j^2=-1\text{\quad
for\quad} j\ne l\ .$$

Those relations are precisely  the ones used to
define \emph{the Clifford algebra} associated to
the vector space $\mathbb{R}^n$. Here is a more
common and general definition of this concept.

\begin{definition}
For a finite dimensional vector space $V$ over
$\mathbb{K}=\mathbb{R}\text{ or }\mathbb{C}\ ,$
and a symmetric bilinear map $B\colon V\times
V\to\mathbb{K}$ , \ define \emph{the Clifford
algebra}
$$Cl(V,B)=T(V)/I(V,B)$$
where $T(V)$ is the tensor algebra of $V$, and
$I(V,B)$ is the ideal generated by $$\{v\otimes
v-B(v,v)\cdot 1\ \colon\  v\in V\}\ .$$
\end{definition}

\begin{remark}\
\begin{enumerate}
\item If $e_1,\dots,e_n$ is an orthogonal basis
for $V$, then $Cl(V,B)$ is the algebra generated
by $\{e_j\}$ with relations
$$e_je_l+e_le_j=0\quad,\quad
e_j^2=B(e_j,e_j)\text{\quad for \quad} j\ne l\
.$$
\item If $<\; ,\; >$ is the standard inner product
on $\mathbb{R}^n$, then denote
$$C_n=Cl(\mathbb{R}^n,-<\; ,\; >)\text {\quad and
\quad} C_n^c=C_n\otimes\mathbb{C}\ .$$

\end{enumerate}
\end{remark}
\begin{ex}
For $n=3$, let $$\gamma_1=\left(
                            \begin{array}{cc}
                              i & 0 \\
                              0 & -i \\
                            \end{array}
                          \right)\qquad
\gamma_2=\left(
           \begin{array}{cc}
             0 & -1 \\
             1 & 0 \\
           \end{array}
         \right)\qquad
\gamma_3=\left(
           \begin{array}{cc}
             0 & i \\
             i & 0 \\
           \end{array}
         \right)\quad.$$
Then $\{\gamma_1,\gamma_2,\gamma_3\}$ satisfy the
required relations, and $\underline
P=\gamma_1\frac{\partial}{\partial x_1}+
\gamma_2\frac{\partial}{\partial x_2}+
\gamma_3\frac{\partial}{\partial x_3}$ will be a
square root of
$\underline\Delta=-\frac{\partial^2}{\partial
x_1^2}-\frac{\partial^2}{\partial
x_2^2}-\frac{\partial^2}{\partial x_3^2}$ .\\
\end{ex}

The above discussion suggests that we look for a
representation $$\rho\colon C_n\to
End(\mathbb{C}^k)\cong M_{k\times k}(\mathbb C)$$
of the Clifford algebra $C_n$. It will be even
better if we can find an irreducible one (since
every representation of $C_n$ is a direct sum of
irreducible ones - see Proposition I.5.4 in
\cite{LM}). Once we fix such a representation, we
can set
$$\gamma_j=\rho(e_j)\qquad ,\qquad \underline
P=\sum_{j=1}^n\gamma_j\frac{\partial}{\partial
x_j}$$ where $\{e_j\}$ is the standard basis for
$\mathbb C^k$. The operator $\underline P\,$,
called \emph{a Dirac operator}, will then be a
square root of $\underline\Delta$.

Here is a known fact about representations of
complex Clifford algebras (proofs can be found in
\cite{Fr} and in \cite{LM}).

\begin{prop}
Any irreducible complex representation of $C_n$
has dimension $2^{[n/2]}$ (where $[n/2]$ is the
floor of $n/2$). Up to equivalence, there are two
irreducible representations if $n$ is odd, and
only one if $n$ is even.
\end{prop}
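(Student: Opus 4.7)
The plan is to determine the full algebra structure of the complexification $C_n^c = C_n\otimes_{\mathbb{R}}\mathbb{C}$ and then read off the irreducible representations from it. Since complex representations of $C_n$ are in natural bijection with complex representations of $C_n^c$, it suffices to classify irreducible complex representations of $C_n^c$.

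First, I would establish a periodicity isomorphism $C_{n+2}^c \cong C_n^c\otimes_{\mathbb{C}} M_2(\mathbb{C})$. Given an orthogonal basis $e_1,\dots,e_{n+2}$, one sends the first $n$ generators into $C_n^c\otimes M_2(\mathbb{C})$ via $e_j\mapsto e_j\otimes A$ and the last two into $1\otimes B_1,\ 1\otimes B_2$, where $A, B_1, B_2$ are carefully chosen $2\times 2$ matrices so that the Clifford relations are preserved (in particular $A$ must anticommute with both $B_1$ and $B_2$, and $B_1, B_2$ themselves must anticommute and square to $-I$). The universal property of $C_{n+2}^c$ yields an algebra homomorphism, which is surjective by inspection of generators and bijective by the dimension count $2^{n+2} = 4\cdot 2^n$.

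Second, I would compute the base cases. We have $C_0^c = \mathbb{C}$ trivially. For $C_1^c$, the single generator satisfies $e_1^2 = -1$, so $C_1^c \cong \mathbb{C}[x]/(x^2+1) \cong \mathbb{C}\oplus\mathbb{C}$. For $C_2^c$, the explicit choice of Pauli-type matrices (essentially as in the example already given in the text) provides a surjection onto $M_2(\mathbb{C})$, which is an isomorphism by dimension. Iterating the periodicity relation then yields $C_{2m}^c \cong M_{2^m}(\mathbb{C})$ and $C_{2m+1}^c \cong M_{2^m}(\mathbb{C})\oplus M_{2^m}(\mathbb{C})$.

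The proposition then follows from standard facts about complex matrix algebras: $M_k(\mathbb{C})$ admits a unique irreducible representation up to equivalence, namely the defining representation on $\mathbb{C}^k$; while $M_k(\mathbb{C})\oplus M_k(\mathbb{C})$ admits exactly two inequivalent irreducibles, given by projection onto each summand followed by the defining representation, each of dimension $k$. Taking $k = 2^{[n/2]}$ gives precisely the stated dichotomy. The main obstacle is the periodicity step, which requires a judicious choice of anticommuting matrices to encode the recursion correctly; once this is set up, everything else is a straightforward induction together with the invocation of the representation theory of matrix algebras.
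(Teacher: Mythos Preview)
Your argument is correct and is essentially the standard proof of this classification, as found in the references the paper cites (Friedrich and Lawson--Michelsohn). Note that the paper itself does not give a proof of this proposition at all: it simply states the result as a known fact and points to \cite{Fr} and \cite{LM}. So there is no ``paper's own proof'' to compare against; your sketch is precisely the kind of argument those references carry out, via complex periodicity $C_{n+2}^c\cong C_n^c\otimes M_2(\mathbb{C})$, the low-dimensional base cases, and Artin--Wedderburn.
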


We conclude that finding a square root for
$\underline\Delta$ is always possible. It is
defined using an irreducible representation of
$C_n$. Note that a choice is to be made if $n$ is
odd.

\section{Manifolds}
\subsection{The problem}\label{man-intro}

We would like to generalize our previous
construction from the Euclidean space to a smooth
$n$ dimensional oriented Riemannian manifold
$(M,g)$. More generally, we look for a complex
Hermitian line bundle $S\to M$ and a smooth map
$$\rho\colon Cl(TM,-g)\to
End(S)$$ which restricts to an irreducible
representation
$$\rho_x\colon Cl(T_xM,-g_x)\to
End(S_x)$$ on the fibers of $S$. The notation
$Cl(TM,-g)$ stands for \emph{the Clifford bundle}
of $(M,g)$. That is, the vector bundle whose
fibers are the Clifford algebra of the tangent
space, with respect to the symmetric bilinear map
$-g$.

Once such a pair $(S,\rho)$ is found, we can
choose a Hermitian connection $\nabla$ on $S$,
and define a Dirac operator acting on smooth
sections of $S$, as follows. Choose a local
orthonormal frame $\{e_j\}$ and let
$$\underline P\colon
\Gamma(S)\to\Gamma(S)\qquad,\qquad \underline
P(s)=\sum_{j=1}^n\rho(e_j)\left[\nabla_{e_j}s\right]\
.$$ It turns out that $\underline P$ is
independent of the local frame, and thus gives
rise to a globally defined operator on sections
of $S$.

\subsection{The search for the vector bundle $S$}
If no additional structure is introduced on our
manifold $M$, then we may try to construct the
vector bundle $S$ as an associated bundle to the
bundle $SOF(M)$ of oriented orthonormal frames on
$M$. This means that we try to take
$$S=SOF(M)\times_{SO(n)}\mathbb C^k$$
where $k=2^{[n/2]}$ and $SO(n)$ acts on $\mathbb
C^k$ via a representation $$\epsilon\colon
SO(n)\to End(\mathbb C^k)\ .$$ We can use an
irreducible representation $$\rho\colon C_n\to
End(\mathbb C^k)$$ in order to define an
 action of the Clifford bundle
$Cl(TM)$ on $S$ as follows.\\
For any $x\in M$, $\alpha\in T_xM\subset
Cl(T_xM,-g_x)$, $v\in\mathbb C^k$ and a frame
$f\colon\mathbb R^n\xrightarrow{\simeq} T_xM$ in
$SOF_x(M)$, let $\alpha$ act on $[f,v]\in S_x$ by
\begin{equation*}(\alpha,[f,v]) \mapsto
[f\,,\,\rho(f^{-1}(\alpha))v] \ . \end{equation*}\\
This will be a well defined action on $S$ if and
only if
$$[f\circ A\,,\,\rho((f\circ
A)^{-1}\alpha)v]=[f,\rho(f^{-1}(\alpha))(\epsilon(A)v)]$$
for any $A\in SO(n)$. This is equivalent to
$$\epsilon(A)\circ
\rho(A^{-1}y)=\rho(y)\circ\epsilon(A)$$ where
$y=f^{-1}(\alpha)$, and is an equality between
linear endomorphisms of $\mathbb C^k$. To
summarize, we look for a representation
$\epsilon\colon SO(n)\to End(\mathbb C^k)$ with
the property that
\begin{equation}\label{eqn-so(n)}
\rho(Ay)=\epsilon(A)\circ\rho(y)\circ\epsilon(A)^{-1}
\end{equation}
for all $A\in SO(n)\text{ and }y\in\mathbb R^n$.

Unfortunately:
\begin{claim}
For $n\ge 3$, there is no representation
$\epsilon\colon SO(n)\to End(\mathbb C^k)$
satisfying Equation~(\ref{eqn-so(n)}) for all $A$
and $y$.
\end{claim}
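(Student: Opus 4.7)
The plan is to assume such a representation $\epsilon$ exists and derive a contradiction by pulling it back through the spin double cover $\pi\colon Spin(n)\to SO(n)$. Recall that $Spin(n)$ sits inside $C_n$ in such a way that $\pi(g)y=gyg^{-1}$ (computed in $C_n$) for every $g\in Spin(n)$ and $y\in\mathbb R^n$. Setting $\tilde\epsilon=\epsilon\circ\pi$, Equation~(\ref{eqn-so(n)}) becomes
\[
\tilde\epsilon(g)\,\rho(y)\,\tilde\epsilon(g)^{-1}=\rho(\pi(g)y)=\rho(gyg^{-1})=\rho(g)\,\rho(y)\,\rho(g)^{-1},
\]
using that $\rho$ is an algebra homomorphism on $C_n$.

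The first major step is to apply Schur's lemma. The operator $\rho(g)^{-1}\tilde\epsilon(g)$ commutes with $\rho(y)$ for every $y\in\mathbb R^n$, hence with every element of the image $\rho(C_n^c)$, which acts irreducibly on $\mathbb C^k$. Thus $\rho(g)^{-1}\tilde\epsilon(g)=c(g)\cdot I$ for a scalar $c(g)\in\mathbb C^*$, giving $\tilde\epsilon=c\cdot\rho$ on $Spin(n)$. Since $\tilde\epsilon$ and $\rho|_{Spin(n)}$ are continuous group homomorphisms into $GL(\mathbb C^k)$, a direct check shows that $c\colon Spin(n)\to\mathbb C^*$ is a continuous character.

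Finally, I would evaluate at the element $-1\in Spin(n)\subset C_n$. On the one hand $\pi(-1)=I\in SO(n)$, so $\tilde\epsilon(-1)=\epsilon(I)=I$; on the other hand $\rho(-1)=-I$ because $\rho$ is a unital algebra map. Hence $c(-1)=-1$. But for $n\ge 3$ the group $Spin(n)$ is compact, connected, and semisimple, so every continuous homomorphism $Spin(n)\to\mathbb C^*$ is trivial---in particular $c(-1)=1$, which is the desired contradiction. The step I expect to be most delicate is the Schur application: it is the bridge between the arbitrary representation $\epsilon$ on the $SO(n)$ side and the concrete Clifford-algebra element $\rho(g)$ on the $Spin(n)$ side, and it is what allows a single scalar function $c$ to carry all the information and be forced to be a character. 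Once this identification is in place, the remainder is a clean assembly of standard facts about the embedding $Spin(n)\subset C_n$ and about characters of compact semisimple Lie groups.
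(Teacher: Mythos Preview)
Your argument is correct. The Schur-lemma step you single out is indeed the crux, and it is exactly the same mechanism the paper exploits in the proof of its main theorem (Theorem~\ref{main-thm}, part~(2)): both arguments use that $\rho(g)^{-1}$ conjugated against the putative $\epsilon$-side yields an intertwiner for the irreducible Clifford module, hence a scalar. Where you diverge from the paper is in how the contradiction is extracted. The paper defers the claim to Claim~\ref{claim-exmps}, which invokes the full universal property to produce a homomorphism $f\colon SO(n)\to G=(Spin(n)\times\mathbb C^\times)/K$ with $p\circ f=\mathrm{Id}$, and then rules this out topologically via $\pi_1(SO(n))=\mathbb Z_2$ versus $\pi_1(G)=\mathbb Z$. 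You instead stay on $Spin(n)$, package the scalar as a continuous character $c\colon Spin(n)\to\mathbb C^\times$, and kill it using semisimplicity. Your route is more self-contained---it does not presuppose the universal property or the computation of $\pi_1(G)$---while the paper's route has the advantage of exhibiting the obstruction as a special case of a uniform classification (Corollary~\ref{cor1}) that simultaneously handles the positive cases $G'=Spin(n)$ and $G'=U(n/2)$.
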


The proof  will follow from a more general
statement later (see Claim \ref{claim-exmps}).

\subsection{Introducing additional structure}
It seems that in order to construct a vector
bundle $S$ over an $n$ dimensional oriented
Riemannian manifold $(M,g)$, on which the
Clifford bundle $ Cl(TM)$ acts irreducibly, we
will have to introduce new structure on our
manifold: we will need to lift the structure
group from $SO(n)$ to a `better' group $G$. Here
is what we mean by \emph{lifting the structure
group}.

\begin{definition}
For an $n$ dimensional oriented Riemannian
manifold $(M,g)$, a \emph{lifting of the
structure group to a Lie group $G$} is a
principal $G$-bundle $\pi\colon P\to M$, together
with a group homomorphism $p\colon G\to SO(n)$
and a smooth map $\pi_1\colon P\to SOF(M)$ such
that
$$\pi_1(x\cdot g)=\pi_1(x)\cdot
p(g)\qquad\text{for}\qquad x\in P\ ,\ g\in G\ ,$$
and such that $\pi=\pi_2\circ\pi_1$ (where
$\pi_2\colon SOF(M)\to M$ is the projection).
\end{definition}

In other words, we require that the following
diagram will commute, and~$\pi=\pi_2\circ\pi_1$.
$$\begin{CD}
P@<<<P\times G\\
@V\pi_1 VV @VV \pi_1\times p V\\
SOF(M)@<<<SOF(M)\times SO(n)\\
@V \pi_2 VV\\
M\\[15pt]
\end{CD} $$

Once we have such a lift, we can try to construct
our vector bundle of rank $k=2^{[n/2]}$ as
$$S=P\times_G\mathbb C^k\ ,$$
where $G$ acts on $\mathbb C^k$ via a
representation $\epsilon\colon G\to End(\mathbb
C^k)$.\\
This will work if the action of $Cl(TM)$ on $S$,
given by
$$(\alpha,[\tilde f,v])\mapsto[\tilde f,\rho(f^{-1}(\alpha))v]\ ,$$
is well defined. Here $\alpha\in Cl(T_xM)$,
$\tilde f\in P_x$, $v\in\mathbb C^k$, and
$f=p(\tilde f)\colon \mathbb R^n\to T_xM$ is a
frame in $SOF_x(M)$. As before, $\rho$ is a fixed
irreducible complex representation of $C_n$.

Equation (\ref{eqn-so(n)}), which state the
condition $\epsilon$ has to satisfy, becomes
\begin{equation}\label{eqn-G}
\rho(p(A)y)=\epsilon(A)\circ\rho(y)\circ\epsilon(A)^{-1}
\end{equation}
for all $y\in\mathbb R^n$ and $A\in G$.

To summarize, we look for a Lie group $G$, and a
representation $\epsilon\colon G\to End(\mathbb
C^k)$ for which Equation (\ref{eqn-G}) is
satisfied for all $A$ and $y$.

\section{The main theorem}
Given  an irreducible representation $\rho\colon
C_n\to End(\mathbb C^k)$ ($k=2^{[n/2]}$), we look
for a Lie group $G$, a representation
$\epsilon\colon G\to End(\mathbb C^k)$, and a
homomorphism $p\colon G\to SO(n)$ for which
Equation (\ref{eqn-G}) is satisfied. Note that
this problem is of algebraic flavour and does not
involve the manifold, the tangent bundle or the
Clifford bundle. Thus, our problem is reduced to
one where the unknowns are a Lie group, a
representation and a group homomorphism.

As we will see, there is more than one solution
to this problem, but only one (up to a certain
equivalence) which is universal in the sense that
every other solution will factor through the
universal one. In this universal solution, the
group is $$G=\left(Spin(n)\times\mathbb
C^\times\right)/K$$ where $Spin(n)$ is the double
cover of $SO(n)$ and $K$ is the two element
subgroup generated by $(-1,-1)$. This is a
noncompact group.

Another way to define this group is as the set of
all elements of the form
$$c\cdot v_1v_2\cdots v_l\in C_n^c=C_n\otimes\mathbb C$$
where $c\in\mathbb C^\times$, $l\ge 0$ is even,
and each $v_j\in\mathbb R^n$ is of (Euclidean)
norm 1.

For each element $x\in G$ and $y\in\mathbb
R^n\subset\mathbb C_n^c$, we have $Ad_x(y)=x\cdot
y\cdot x^{-1}\in\mathbb R^n$, and the map
$$y\in\mathbb R^n\mapsto Ad_x(y)\in\mathbb R^n$$
is in $SO(n)$. This defines a group homomorphism
$$\lambda^c\colon G\to SO(n)\quad,\quad x\mapsto Ad_x$$
(see \cite{Fr} for details).

Finally, note that any $B\in SO(n)$ acts on the
Clifford algebra $C_n^c$ in a natural way. This
action is induced from the standard action of
$SO(n)$ on $\mathbb R^n$. Furthermore, Equation
(\ref{eqn-G}) is satisfied for all $y\in\mathbb
R^n$ and $A\in G$ if and only if it is satisfied
for all $y\in C_n^c$ and $A\in G$.

Now we can state the universality property of the
group $G$.
\begin{theorem}\label{main-thm}
Fix an irreducible complex representation $\rho$
of $C_n^c$, and let $k=2^{[n/2]}$. Then:
\begin{enumerate}
\item For  $G=\left(Spin(n)\times\mathbb C^\times\right)/K$, $p=Ad\colon G\to SO(n)$
and $\epsilon=\rho|_G\colon G\to~ End(\mathbb
C^k)$, we have
$$\rho(p(A)y)=\epsilon(A)\circ\rho(y)\circ\epsilon(A)^{-1}$$
for all $y\in C_n^c$ and $A\in G$.
\item If $G'$ is a Lie group, $p'\colon G'\to
SO(n)$ a group homomorphism, and $\epsilon'\colon
G'\to End(\mathbb C^k)$  a representation, such
that
$$\rho(p'(A)y)=\epsilon'(A)\circ\rho(y)\circ\epsilon'(A)^{-1}$$
for all $y\in C_n^c$ and $A\in G'$, then there is
a unique homomorphism $f\colon G'\to G$ such that
$$ p'=p\circ f\qquad\text{and}\qquad
\epsilon'=\epsilon\circ f\ .$$
\end{enumerate}
\end{theorem}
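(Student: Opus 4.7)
Part~(1) should be essentially tautological once one uses the presentation $G\subset C_n^c$ as scalar multiples of products $v_1\cdots v_{2m}$ of unit vectors, together with the definition $p(A)y=AyA^{-1}$. Since $\rho\colon C_n^c\to End(\mathbb C^k)$ is an algebra homomorphism and $\epsilon=\rho|_G$, for $A\in G$ and $y\in\mathbb R^n$ the identity
$$\rho(p(A)y)=\rho(AyA^{-1})=\rho(A)\rho(y)\rho(A)^{-1}=\epsilon(A)\rho(y)\epsilon(A)^{-1}$$
is immediate, and the remark preceding the theorem extends it from $\mathbb R^n$ to all of $C_n^c$.

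My plan for part~(2) is to construct $f$ pointwise by Schur's lemma. Two structural facts about $G$ will be used: $p=\lambda^c\colon G\to SO(n)$ is surjective, and its kernel is precisely the central subgroup $\mathbb C^\times\hookrightarrow G$ (embedded via scalars) on which $\epsilon$ acts as $c\mapsto c\cdot I_k$. Given $A\in G'$, first pick any lift $x\in G$ with $p(x)=p'(A)$. Combining~(1) with the intertwining hypothesis on $(\epsilon',p')$ gives
$$\rho(x)\rho(y)\rho(x)^{-1}=\rho(p'(A)y)=\epsilon'(A)\rho(y)\epsilon'(A)^{-1}\qquad(y\in C_n^c),$$
so $\epsilon'(A)\rho(x)^{-1}$ commutes with the entire image $\rho(C_n^c)$. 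By irreducibility of $\rho$ and Schur's lemma there is a unique $\lambda\in\mathbb C^\times$ with $\epsilon'(A)=\lambda\rho(x)$, and I set $f(A):=\lambda x\in G$. Independence of the choice of $x$ is automatic: another lift is $x'=cx$ with $c\in\mathbb C^\times$, in which case the Schur scalar becomes $\lambda/c$ and the product is unchanged. By construction $\epsilon\circ f=\epsilon'$ and $p\circ f=p'$.

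Uniqueness of $f$ reduces to injectivity of $(\epsilon,p)\colon G\to End(\mathbb C^k)\times SO(n)$: if $p(g)=1$ then $g\in\mathbb C^\times$, and then $\epsilon(g)=g\cdot I_k=I_k$ forces $g=1$. The same injectivity forces the homomorphism property, because $f(AB)$ and $f(A)f(B)$ have equal images under both $\epsilon$ and $p$. Smoothness of $f$ I will verify locally: choose a smooth local section $\sigma$ of the submersion $p\colon G\to SO(n)$ near $p'(A_0)$, so that on a neighbourhood of $A_0$ one has $f(A)=\lambda(A)\,\sigma(p'(A))$ with $\lambda(A)=\tfrac{1}{k}\mathrm{tr}\bigl(\epsilon'(A)\,\rho(\sigma(p'(A)))^{-1}\bigr)$; every factor in this expression is manifestly smooth in $A$.

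The only nontrivial analytic input is Schur's lemma; everything else is bookkeeping. The main place where I expect to have to be a little careful—and the point I would verify first—is that $\lambda^c$ really is a surjective submersion with kernel precisely the scalars, since otherwise either the choice of lift $x$ or the scalar-absorption argument would break down. This is standard and contained in~\cite{Fr}.
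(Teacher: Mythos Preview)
Your proof is correct and follows essentially the same strategy as the paper's: lift $p'(g)$ to $G$ (the paper lifts to $Spin(n)$, you lift to $G$ itself, which is cosmetic), use irreducibility/Schur to see that the discrepancy is a nonzero scalar, and define $f$ as the resulting product. Your packaging of uniqueness and the homomorphism property via injectivity of $(\epsilon,p)\colon G\to End(\mathbb C^k)\times SO(n)$ is slightly slicker than the paper's direct verification, and you also supply a local smoothness argument that the paper omits.
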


\begin{remark}\
\begin{enumerate}
\item The group $G$ acts on $C_n^c$ via
$$(A,y)\mapsto p(A)y$$ and on $End(\mathbb C^k)$
via $$(A,\varphi)\mapsto \epsilon
(A)\circ\varphi\circ\epsilon(A)^{-1}\ .$$
Therefore, in part (1) of the theorem we claim
that $\rho$ is $G$-equivariant.
\item Part (2) of the theorem implies that the
the following two diagrams are commutative.

{
\begin{diagram}
        &       &G      &&   &       &       &G\\
        &\ruDashto^{f}  &\dTo_{p}           &&   &       &\ruDashto^{f}  &\dTo_{\varepsilon}\\
G'       &\rTo_{p'}   &SO(n)          &&   &G'      &\rTo_{\varepsilon' \ \ }   &End(\mathbb{C}^k)\\
\end{diagram}
}
\end{enumerate}

\end{remark}

\begin{proof} \
\begin{enumerate}
\item For any $A\in G$ and $y\in C_n^c$
we have
\begin{align*}
\rho(p(A)y)&=\rho(Ad_A(y))=\rho(A\cdot y\cdot
A^{-1})=\rho(A)\cdot\rho(y)\cdot\rho(A^{-1})=\\
&=\epsilon(A)\circ\rho(y)\circ\epsilon(A)^{-1}\hfill
\end{align*}
\item Fix an element $g\in G'$, and choose an
element $A\in Spin(n)$ for which
$p(A)=Ad_A=p'(g)$. We claim that the endomorphism
$$\rho(A^{-1})\circ\epsilon'(g)\colon\mathbb
C^k\to\mathbb C^k$$ is a nonzero (complex)
multiple of the identity. To see this, start from
the given equality
$$\rho(p'(g)y)=\epsilon'(g)\circ\rho(y)\circ\epsilon'(g)^{-1}$$
which is equivalent to
$$\epsilon'(g)\circ\rho(y)=
\rho(Ad_A(y))\circ\epsilon'(g)$$ and to
$$\left[\rho(A^{-1})\circ\epsilon'(g)\right]\circ\rho(y)=\rho(y)
\circ\left[\rho(A^{-1})\circ\epsilon'(g)\right]
$$ for all  $y\in C_n^c$.

It is known that any irreducible complex
representation of $C_n^c$ must be onto, and hence
the last equality means that
$\rho(A^{-1})\circ\epsilon'(g)$ commutes with all
endomorphisms of $\mathbb C^k$, and thus must be
a multiple of the identity (it is nonzero since
it is invertible). Write
$\rho(A^{-1})\circ\epsilon'(g)=c\cdot I$ for
$c\in \mathbb C^\times$ and define
$$f\colon G'\to G\qquad,\qquad g\mapsto [A,c]\in
G\ .$$ This map is a well defined. It is a group
homomorphism since if $g_j\in G'$ (for $j=1,2$),
$A_j\in Spin(n)$ with $p'(g_j)=Ad_{A_j}$ and
$c_j\in\mathbb C^\times$ satisfy
$$c_j\cdot
I=\rho(A_j^{-1})\circ\epsilon'(g_j)$$ then we
have $$p(g_1g_2)=Ad_{A_1A_2}$$ and
$$ c_1c_2\cdot
I=\rho((A_1A_2)^{-1})\circ\epsilon'(g_1g_2)\ .$$
This implies that $f(g_1g_2)=f(g_1)f(g_2)$.

Also we have
$$p'(g)=Ad_A=Ad_{c\cdot A}=p(f(g))$$
and
$$\epsilon'(g)=c\cdot\rho(A)=\epsilon(c\cdot
A)=\epsilon(f(g))$$ for all $g\in G'$ as needed.

It is not hard to see that our construction
implies also the uniqueness of the map $f$. After
all, if such an $f$ exists, and for $g\in G'$ we
write $f(g)=[A,c]\in G$ , then $p(A)=Ad_A=p'(g)$,
which means than $A$ is determined up to sign.
Furthermore, the relation
$\epsilon'(g)=\epsilon(f(g))$ implies that
$$\rho(A^{-1})\circ\epsilon'(g)=\epsilon([1,c])=c\cdot
I\ ,$$ which determines the value of $c$.
Therefore $f(g)=[A,c]$ is uniquely determined by
our conditions.

\end{enumerate}
\end{proof}

\begin{remark} \
\begin{enumerate}
\item The triple $(G=\left(Spin(n)\times\mathbb
C^\times\right)/K,p,\epsilon)$ is the only
universal solution up to equivalence. More
precisely, if $(G',p',\epsilon')$ is another
universal solution, then there is a unique
isomorphism $\varphi\colon G'\to G$ satisfying
$\epsilon'=\epsilon\circ\varphi$ and
$p'=p\circ\varphi$.
\item There is a natural Hermitian product on the
representation space $\mathbb C^k$ with respect
to which $\rho$ is unitary. If we require that
$\epsilon'$ will be unitary, then universal
solution will involve the (compact) group
$$Spin^c(n)=\left(Spin(n)\times U(1)\right)/K \ .$$
The universality statement and the proof are
almost identical to the noncompact case.
\item It is important to note that although the
Dirac operator is a square root of the Laplacian
in the case of $\mathbb R^n$, this is no longer
true in the manifold case. The Dirac operator,
whose definition was outlined in
\S\ref{man-intro}, will be related to the
Laplacian via the Schr\"{o}dinger-Lichnerowicz
formula, which involves the scalar curvature of
the manifold, and the curvature of the Hermitian
connection on the vector bundle $S$ (see \S3.3 in
\cite{Fr}).
\end{enumerate}
\end{remark}

\section{Some corollaries}
Denote again by $\rho\colon C_n\to End(\mathbb
C^k)$ ($k=2^{[n/2]}$) an irreducible
representation, $G=\left(Spin(n)\times\mathbb
C^\times\right)/K$, and by $p\colon G\to SO(n)$
the natural homomorphism. Then Theorem
\ref{main-thm} implies the following.

\begin{cor}\label{cor1}
A Lie group $G'$ and a homomorphism $p'\colon
G'\to SO(n)$ give rise to a bijection between
\begin{multline*}
\mathcal{A}=\left\{
\begin{array}{c}
  \text{Representations }\epsilon'\colon G'\to
End(\mathbb C^k)\text{ satisfying }\\
\epsilon'(g)\circ\rho(y)=\rho(p'(g)y)\circ\epsilon'(g)
\text{ for all } g\in G',\ y\in\mathbb R^n
\end{array}
\right\}\hfill\\
\end{multline*}

and

\begin{multline*}
\mathcal B=\left\{ \text{ Homomorphisms }f\colon
G'\to G\text { such that } p'=p\circ f \
\right\}\hfill
\end{multline*}
\end{cor}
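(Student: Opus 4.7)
The plan is to exhibit two explicit maps between $\mathcal{A}$ and $\mathcal{B}$ and verify that they are mutually inverse, using Theorem \ref{main-thm} as the workhorse.

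For the map $\Phi\colon \mathcal{A}\to\mathcal{B}$, I would take an element $\epsilon'\in\mathcal{A}$ and feed the triple $(G',p',\epsilon')$ into part (2) of the main theorem. That part produces a \emph{unique} homomorphism $f\colon G'\to G$ with $p'=p\circ f$ (which places $f$ in $\mathcal{B}$) and, additionally, $\epsilon'=\epsilon\circ f$. So I set $\Phi(\epsilon'):=f$. For the map $\Psi\colon\mathcal{B}\to\mathcal{A}$, I would take $f\in\mathcal{B}$ and simply define $\Psi(f):=\epsilon\circ f$. To see that $\epsilon\circ f$ actually lies in $\mathcal{A}$, I would compose: for $g\in G'$ and $y\in\mathbb{R}^n$,
\begin{align*}
(\epsilon\circ f)(g)\circ\rho(y)\circ(\epsilon\circ f)(g)^{-1}
&=\epsilon(f(g))\circ\rho(y)\circ\epsilon(f(g))^{-1}\\
&=\rho\bigl(p(f(g))\,y\bigr)=\rho(p'(g)\,y),
\end{align*}
where the middle equality is exactly part (1) of the main theorem and the last uses $p'=p\circ f$.

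The verification that $\Phi$ and $\Psi$ are inverse to each other is then immediate from the uniqueness clause in part (2). Indeed, $\Psi(\Phi(\epsilon'))=\epsilon\circ f=\epsilon'$ by the construction of $\Phi$. Conversely, given $f\in\mathcal{B}$, the homomorphism $\Phi(\Psi(f))$ is characterized as the unique map $G'\to G$ satisfying $p'=p\circ(\,\cdot\,)$ and $\epsilon\circ f=\epsilon\circ(\,\cdot\,)$; but $f$ itself satisfies both conditions, so uniqueness forces $\Phi(\Psi(f))=f$.

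There is no real obstacle here; the only small subtlety is making sure that $\Psi$ lands in $\mathcal{A}$ (which requires part (1) of Theorem \ref{main-thm}, i.e.\ the $G$-equivariance of $\rho$) and that $\Phi$ lands in $\mathcal{B}$ (which is built into the conclusion of part (2)). Everything else reduces to quoting the existence and uniqueness already established.
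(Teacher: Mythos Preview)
Your proposal is correct and follows exactly the same approach as the paper: use part~(2) of Theorem~\ref{main-thm} to pass from $\mathcal{A}$ to $\mathcal{B}$, and compose with $\epsilon$ (invoking part~(1)) to go back. The paper's proof is a two-sentence sketch that leaves the mutual-inverse verification implicit, whereas you have spelled out that step explicitly using the uniqueness clause; this is a welcome clarification but not a different argument.
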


\begin{proof}
Part (2) in Theorem \ref{main-thm} provides a
function $f\in\mathcal B$ for every
$\epsilon'\in\mathcal A$. Conversely, if
$f\in\mathcal B$, then $\epsilon'=\epsilon\circ
f$ is in $\mathcal A$, by part (1) of Theorem
\ref{main-thm}.
\end{proof}

The above corollary provides an easy criterion
for checking whether a lifting of the structure
to a group $G'$ will enable us to construct an
irreducible Clifford bundle action on a vector
bundle associated with this lifting. We give a
few examples in the following claim.

\begin{claim}\ \label{claim-exmps}
 If $G'=U(n/2)$ (for an even $n$) or
$G'=Spin(n)$,  then there exist a homomorphism
$p'\colon G'\to SO(n)$ and a representation
$\epsilon'\colon G'\to End(\mathbb C^k)$ for
which
$\epsilon'(g)\circ\rho(y)=\rho(p'(g)y)\circ\epsilon'(g)$
 for all $g\in G',\ y\in\mathbb R^n$.\\
 If $G'=SO(n)$ ($n\ge 3$) and $p'\colon G'\to SO(n)$ is the
  identity, then there is no $\epsilon'$
  satisfying the latter equality.
\end{claim}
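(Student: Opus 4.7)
My plan is to apply Corollary~\ref{cor1}: for each choice of $G'$, the existence of a representation $\epsilon'$ satisfying the intertwining condition (for some homomorphism $p'$) is equivalent to the existence of a Lie group homomorphism $f\colon G'\to G$ with $p\circ f=p'$. So I need to produce such an $f$ in the first two cases and rule it out in the third.

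For $G'=Spin(n)$, I would take $p'=\lambda\colon Spin(n)\to SO(n)$ (the standard double cover) and define $f(A)=[A,1]\in G$; a direct check gives $p\circ f=\lambda$. For $G'=U(m)$ with $n=2m$, I would take $p'\colon U(m)\hookrightarrow SO(2m)$ to be the standard embedding and invoke the classical ``spin$^c$ lift'' $A\mapsto[\tilde A,\zeta]$, where $\tilde A\in Spin(2m)$ is any lift of $p'(A)$ and $\zeta\in\mathbb{C}^\times$ is a square root of $\det A$; the two choices of the pair differ by a simultaneous sign change, so the class in $G$ is well-defined, and $p$ then recovers $p'$.

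The hard part will be the last case. Assume for contradiction that $f\colon SO(n)\to G$ is a Lie group homomorphism with $p\circ f=\mathrm{id}$. The kernel $\ker p=\{[1,c]:c\in\mathbb{C}^\times\}\cong\mathbb{C}^\times$ is central in $G$, so such a splitting would force an isomorphism $G\cong SO(n)\times\mathbb{C}^\times$ of Lie groups; in particular $\pi_1(G)$ would contain a $\mathbb{Z}/2$ summand. On the other hand, using $\mathbb{C}^\times\cong U(1)\times\mathbb{R}_{>0}$ I can identify $G\cong Spin^c(n)\times\mathbb{R}_{>0}$, so $\pi_1(G)\cong\pi_1(Spin^c(n))$. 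The long exact homotopy sequence of the fibration $Spin(n)\to Spin^c(n)\to U(1)$, combined with $\pi_1(Spin(n))=0$ for $n\geq 3$, yields $\pi_1(Spin^c(n))\cong\mathbb{Z}$, which is torsion-free---a contradiction. The crux of the proof is therefore this topological non-triviality of the extension $1\to\mathbb{C}^\times\to G\to SO(n)\to 1$.
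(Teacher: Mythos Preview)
Your proof is correct and follows essentially the same route as the paper: reduce everything to Corollary~\ref{cor1}, exhibit the lifts $f$ for $Spin(n)$ and $U(n/2)$ (the paper simply cites \cite{Fr} for the latter, while you spell out the $[\tilde A,\zeta]$ construction), and rule out the $SO(n)$ case by the mismatch $\pi_1(SO(n))\cong\mathbb Z/2$ versus $\pi_1(G)\cong\mathbb Z$. The paper's version of the last step is slightly more direct---from $p\circ f=\mathrm{id}$ one gets that $f_*$ injects $\mathbb Z/2$ into $\mathbb Z$, a contradiction---whereas you pass through the splitting $G\cong SO(n)\times\mathbb C^\times$ first; both arguments are fine.
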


\begin{proof}
For $G'=Spin(n)$ take $p'$ to be the double cover
of $SO(n)$, $f\colon Spin(n)\to G$ the inclusion,
and use Corollary \ref{cor1}.\\
 For $G'=U(n/2)$,
take $p'$ to be the standard inclusion
$U(n/2)\subset SO(n)$. It is possible to define
$f\colon U(n/2)\to G$ such that $p'=p\circ f$
(see page 27 in \cite{Fr}). By the above
corollary, the conclusion follows.

Finally, for $G'=SO(n)$ and $p'=Id$, if such an
$\epsilon'$ would exist, the corollary implies
that there is an $f\colon SO(n)\to G$ for which
$p\circ f=Id$. This is impossible since the
fundamental group of $SO(n)$ is $\mathbb Z_2$ and
of $G$ is $\mathbb Z$.
\end{proof}

The above claim implies some well known facts:
Every spin and every almost complex manifold is
also a spin$^c$ manifold in a natural way. Also,
an irreducible Clifford module cannot be defined
as a tensor bundle (i.e., as a vector bundle
associated with the frame bundle of the
manifold).

\section{The Symplectic case}
For the symplectic group, a similar problem can
be stated. The universal group in this case will
be the complexified metaplectic group
$Mp^c(n)=Mp(n)\times_{\mathbb Z_2}U(1)$, if we
demand unitary representations, or
$Mp(n)\times_{\mathbb Z_2}\mathbb C^\times$
otherwise. In this section we outline the setting
in this case, and prove a similar universality
statement.

\subsection{Symplectic Clifford algebras}
Let $V$ be a real vector space of dimension $2n$.
If $B\colon V\times V\to\mathbb R$ is a symmetric
bilinear form on V, then the ideal (in the tensor
algebra $T(V)$) generated by expressions of the
form
\begin{equation} v\cdot v-B(v,v)\cdot
1\qquad,\qquad v\in V \end{equation} is the same
one generated by
\begin{equation}\label{rel-omega} v\cdot u+u\cdot
v-2\cdot B(u,v)\qquad,\qquad u,v\in V\
.\end{equation} Suppose now that $\omega\colon
V\times V\to\mathbb R$ is a symplectic (i.e., an
antisymmetric and non-degenerate bilinear) form
on $V$. Since $\omega(v,v)=0$ for all $v\in V$,
we would better modify (\ref{rel-omega}) and
define the symplectic Clifford algebra as
follows. We follow \cite{Sympl. Dirac} and omit
the coefficient `2' in our definition.

\begin{definition}
\emph{The symplectic Clifford algebra} associated
with the symplectic vector space $(V,\omega)$ is
defined as
$$Cl^s(V,\omega)=T(V)/I(V,\omega)$$
where $I(V,B)$ is the ideal generated by
$$\{v\cdot w-w\cdot v+\omega(v,w)\cdot 1:v,w\in V\}\ .$$
\end{definition}

\begin{remark}
If $V=\mathbb R^{2n}$ and $\omega$ is the
standard symplectic form, given by
$$\omega(x,y)=\sum_{j=1}^n x_jy_{n+j}-x_{n+j}y_j\quad,\quad x,y\in\mathbb R^{2n}\ ,$$ then we
denote $$Cl^s_n=Cl^s(\mathbb R^{2n},\omega)
\quad\mbox{and}\quad \mathbb
Cl^s_n=Cl^s_n\otimes\mathbb C\ .$$ The symplectic
Clifford algebra in this case is also called the
\emph{Weyl algebra}, and is useful since its
generators satisfy relations which are similar to
the relations satisfied by the position and
momentum operators in quantum mechanics (see
\S1.4 is \cite{Sympl. Dirac}).
\end{remark}

Denote by $\mathcal{S}(\mathbb R^n)$ the Schwartz
space of rapidly decreasing complex-valued smooth
functions on $\mathbb R^n$. If $e_1,\dots,e_{2n}$
is the standard basis for $\mathbb R^{2n}$, then
define a linear action $$\rho\colon\mathbb
R^{2n}\to End(\mathcal{S}(\mathbb R^n))$$ by
assigning
$$ \rho(e_j)f=i\cdot x_jf\qquad\mbox{for}\qquad
j= 1,\dots,n$$
$$\rho(e_j)f=\frac{\partial f}{\partial
x_j}\qquad\mbox{for}\qquad j=n+1,\dots,2n$$ and
extend by linearity. This action extends (see
\S1.4 in \cite{Sympl. Dirac}) to a linear map
$$Cl^s_n\to End(\mathcal S(\mathbb R^n))$$ which
is \emph{not} an algebra homomorphism.

For each $v\in\mathbb R^{2n}$, $\rho(v)$ can be
regarded as a continuous operator on the Schwartz
space. We call the map $\rho$ \emph{Clifford
multiplication}.

\subsection{The metaplectic representation} The
metaplectic group $Mp(n)$ will play the role that
the spin group $Spin(n)$ played in the Riemannian
case. The symplectic group is
$$Sp(n)=\{A\in
GL(2n,\mathbb R):\omega(Av,Aw)=\omega(v,w)\}$$
where $\omega$ is the standard symplectic form on
$\mathbb R^{2n}$. This is a connected and
non-compact Lie group.

The fundamental group of $Sp(n)$ is isomorphic to
$\mathbb Z$, and thus $Sp(n)$ has a unique
connected double cover, which is denoted by
$Mp(n)$. Denote by $$p\colon Mp(n)\to Sp(n)$$ the
covering map, and by $-1\in Mp(n)$ the nontrivial
element in $Ker(p)$.

Define $$G=\left(Mp(n)\times\mathbb
C^\times\right)/K$$ where $K=\{(1,1),(-1,-1)\}$.
The covering map extends to a map (also denoted
by $p$) $$G\to Sp(n)\qquad,\qquad [A,z]\mapsto
p(A) \ .$$

There is an important infinite dimensional
unitary representation of the metaplectic group
on the Hilbert space $L^2(\mathbb R^n)$, which is
called \emph{the metaplectic representation}. We
denote it by $$\mathfrak{m}\colon Mp(n)\to
U(L^2(\mathbb R^n))$$ where $U(L^2(\mathbb R^n))$
is the group of unitary operators on $L^2(\mathbb
R^n)$. For the construction of $\mathfrak m$, see
\cite{Sympl. Dirac} and references therein. This
representation has many interesting properties,
but all we need here is the facts that the
Schwartz space $\mathcal S(\mathbb R^n)\subset
L^2(\mathbb R^n)$ is an invariant subspace for
$\mathfrak m$, and is dense in $L^2(\mathbb
R^n)$.

We extend $\mathfrak m$ to a representation of
the group $G=(Mp(n)\times\mathbb C^\times)/K$, by
$$\epsilon\colon G\to End(L^2(\mathbb
R^n))\qquad,\qquad
\epsilon([A,z])=z\cdot\mathfrak m(A)\ .$$

\subsection{The universality of the metaplectic
group} Now we can state the universality theorem
(for the group $G$), which turns out to be almost
identical to the corresponding theorem in the
Riemannian case.

\begin{theorem}\label{main-thm-sym}
Let $\rho\colon \mathbb R^{2n}\to End(\mathcal
S(\mathbb R^n))$ be the Clifford multiplication
map. Then:
\begin{enumerate}
\item For  $G=\left(Mp(n)\times\mathbb C^\times\right)/K$, $p\colon G\to
Sp(n)$ and $\epsilon\colon G\to~ End(L^2(\mathbb
R^n))$ defined above, we have
$$\rho(p(A)y)=\epsilon(A)\circ\rho(y)\circ\epsilon(A)^{-1}$$
for all $y\in \mathbb R^{2n}$ and $A\in G$ (i.e.,
$\rho$ is $G$-equivariant).\\This is an equality
of operators on the Schwartz space $\mathcal
S(\mathbb R^n)$.\\
\item If $G'$ is a Lie group, $p'\colon G'\to
Sp(n)$ a group homomorphism, and $\epsilon'\colon
G'\to End(\mathcal S(\mathbb R^n))$  a continuous
representation, such that
$$\rho(p'(A)y)=\epsilon'(A)\circ\rho(y)\circ\epsilon'(A)^{-1}$$
for all $y\in \mathbb R^{2n}$ and $A\in G'$, then
there is a unique homomorphism $f\colon G'\to G$
such that
$$ p'=p\circ f\qquad\text{and}\qquad
\epsilon'=\epsilon\circ f\ .$$
\end{enumerate}
\end{theorem}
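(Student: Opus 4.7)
The plan is to mirror the Riemannian proof as closely as possible, with two extra pieces of input that the symplectic setting forces on us: the defining intertwining property of the metaplectic representation (replacing the tautology in the Clifford-algebraic case), and an infinite-dimensional Schur-type statement for the Schr\"odinger/Clifford-multiplication action (replacing the finite-dimensional irreducibility argument).

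For part (1), I would invoke the fundamental property of the metaplectic representation $\mathfrak{m}$, namely that $\mathfrak{m}(A)\circ\rho(y)\circ\mathfrak{m}(A)^{-1}=\rho(p(A)y)$ for all $A\in Mp(n)$ and $y\in\mathbb R^{2n}$, as operators on $\mathcal S(\mathbb R^n)$. This is precisely the defining feature of $\mathfrak m$ and is proved in \cite{Sympl. Dirac}. Since $\epsilon([A,z])=z\cdot\mathfrak m(A)$ differs from $\mathfrak m(A)$ by a central scalar, that scalar cancels under conjugation and the identity descends to $G=(Mp(n)\times\mathbb C^\times)/K$ acting on $\mathbb R^{2n}$ via $p$.

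For part (2), I follow the Riemannian template. Given $g\in G'$, lift $p'(g)\in Sp(n)$ to some $A\in Mp(n)$ and consider $T_g=\mathfrak m(A)^{-1}\circ\epsilon'(g)\colon\mathcal S(\mathbb R^n)\to\mathcal S(\mathbb R^n)$. The equivariance hypothesis on $\epsilon'$, combined with part (1) applied to $\mathfrak m(A)$, gives $T_g\circ\rho(y)=\rho(y)\circ T_g$ for every $y\in\mathbb R^{2n}$. The key step is then to show that any continuous endomorphism of $\mathcal S(\mathbb R^n)$ commuting with all of the operators $\rho(e_1),\dots,\rho(e_{2n})$ (multiplication by $ix_j$ and $\partial/\partial x_j$) is a scalar multiple of the identity. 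Granted this, I write $T_g=c(g)\cdot I$ with $c(g)\in\mathbb C^\times$ (nonzero since $T_g$ is invertible) and define $f(g)=[A,c(g)]\in G$. Well-definedness is immediate: replacing $A$ by $-A$ flips the sign of $c(g)$, and $(-A,-c(g))\sim(A,c(g))$ in $G$. The homomorphism property, the two commutativity relations $p'=p\circ f$ and $\epsilon'=\epsilon\circ f$, and uniqueness follow by the same manipulations as in the proof of Theorem~\ref{main-thm}. Smoothness of $f$ should follow from the continuity of $\epsilon'$ together with the local splittings of the covers $Mp(n)\to Sp(n)$ and $G\to G/\mathbb C^\times$.

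The main obstacle is the infinite-dimensional Schur step: the statement that the commutant of $\{\rho(y):y\in\mathbb R^{2n}\}$ inside the continuous endomorphisms of $\mathcal S(\mathbb R^n)$ consists only of scalars. In the finite-dimensional Riemannian setting this was the trivial remark that an irreducible representation of $C_n^c$ is surjective onto $\mathrm{End}(\mathbb C^k)$. Here, the analogous content is (essentially) the Stone--von~Neumann uniqueness theorem for the Schr\"odinger representation of the Heisenberg group. I would prove the required commutant statement directly: commuting with multiplication by every $x_j$ forces $T$ to be local in the $x$-variables (hence multiplication by a distribution or function), while commuting additionally with every $\partial/\partial x_j$ forces that function to be constant. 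This is the one step that uses genuinely symplectic/analytic input rather than formal transport of the Riemannian argument; everything else is bookkeeping parallel to Theorem~\ref{main-thm}.
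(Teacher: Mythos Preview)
Your proposal is correct and follows essentially the same route as the paper: cite the intertwining property of the metaplectic representation for part~(1), and for part~(2) lift $p'(g)$ to $A\in Mp(n)$, show that $\epsilon(A)^{-1}\circ\epsilon'(g)$ commutes with all $\rho(y)$ and hence is a scalar, then define $f(g)=[A,c]$ and finish as in Theorem~\ref{main-thm}. The only cosmetic difference is that the paper justifies the Schur step by citing irreducibility of the Schr\"odinger representation from \cite{Kirillov} (which is Stone--von~Neumann), whereas you propose either that or a direct commutant argument; your additional remarks on well-definedness and smoothness of $f$ go slightly beyond what the paper spells out.
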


\begin{proof} \
\begin{enumerate}
\item For $Mp(n)$, this is proved in Lemma
1.4.4 in \cite{Sympl. Dirac}. The proof for $G$
follows.

\item To prove the second part, we follow the
same idea as in the Riemannian case. Fix an
element $g\in G'$, and choose an element $A\in
Mp(n)$ for which $p(A)=p'(g)$. We show that the
endomorphism
$$ D=\epsilon
(A^{-1})\circ\epsilon'(g)\colon\mathcal S(\mathbb
R^n)\to\mathcal S(\mathbb R^n)$$ is a nonzero
complex multiple of the identity. Once this is
done, the rest of the proof will be identical to
the proof of Theorem \ref{main-thm}, part (2).

By assumption, we have

$$\epsilon(A)\circ\rho(y)\circ\epsilon(A)^{-1}=
\epsilon'(g)\circ\rho(y)\circ\epsilon'(g)^{-1}$$
which is equivalent to
$$\rho(y)\circ D=D\circ\rho(y) $$
for all $y\in\mathbb R^{2n}$. From the definition
of $\rho$ we conclude that $D$ is a continuous
operator on the Schwartz space $\mathcal
S(\mathbb R^n)$ which commutes with all
multiplication and derivative operators:
$$f\mapsto x_j\cdot f\qquad\mbox{and}\qquad
f\mapsto\frac{\partial f}{\partial x_j}\ .$$ Such
an operator must be a complex multiple of the
identity. This follows from the fact that the map
$\rho$ gives rise to an irreducible
representation of the symplectic Clifford algebra
$Cl_n^s$ on the space $L^2(\mathbb R^n;\mathbb
C)$. For a proof of this fact for $n=1$ see
Theorem 3 (page 44) in \cite{Kirillov}. The
$n$-dimesional case follows.

\end{enumerate}
\end{proof}

\begin{remark}
As in the Riemannian case, if we require that
$\epsilon'$ will be a unitary representation,
then the group $G$ in Theorem \ref{main-thm-sym}
will be replaced with $\left(Mp(n)\times\mathbb
U(1)\right)/K$.
\end{remark}

\begin{remark}
The construction of a Dirac operator in the
Riemannian case was motivated by the search for a
square root for the (negative) Laplacian. One may
wonder what is the symplectic analog of the Dirac
and the Laplacian operators. In Chapter 5 of
\cite{Sympl. Dirac} symplectic Dirac and
associated second order operators are discussed.
However, it is not clear to me if the
\emph{search for a square root} in the Riemannian
case has a (satisfactory) symplectic analogue.
\end{remark}

\end{document}